\documentclass[preprint,12pt]{elsarticle}

\usepackage{graphicx}
\usepackage{booktabs}
\usepackage{bm}
\usepackage{combelow}
\usepackage[all,web]{xy}
\usepackage{amsthm}
\usepackage{amssymb}
\usepackage{mathtools}
\usepackage{xfrac}
\usepackage[T1]{fontenc}
\usepackage{textcomp}
\usepackage[scr=boondox, frak=boondox]{mathalpha}

\def\CC{\mathbb{C}}

\def\PP{\mathbb{P}}
\def\QQ{\mathbb{Q}}
\def\RR{\mathbb{R}}

\def\ZZ{\mathbb{Z}}
\def\hyper{\mathfrak{H}}

\newcommand{\mtwo}[4]{\left(
        \begin{matrix}#1&#2\\#3&#4
        \end{matrix}\right)}
\newcommand{\abcd}[4]{\left(
        \begin{smallmatrix}#1&#2\\#3&#4\end{smallmatrix}\right)}
\newcommand{\legendre}[2]{\genfrac{(}{)}{}{}{#1}{#2}}

\DeclarePairedDelimiter\floor{\lfloor}{\rfloor}	
\def\mathalfa@frakscaled{s*[1]}
\makeatother
\DeclareFontFamily{U}{BOONDOX-frak}{\skewchar \font =45}
\DeclareFontShape{U}{BOONDOX-frak}{m}{n}{
    <-> \mathalfa@frakscaled  BOONDOX-r-frak}{}
\DeclareFontShape{U}{BOONDOX-frak}{b}{n}{
    <-> \mathalfa@frakscaled  BOONDOX-b-frak}{}
\DeclareSymbolFont{boondoxfrak}{U}{BOONDOX-frak}{m}{n}
\SetSymbolFont{boondoxfrak}{bold}{U}{BOONDOX-frak}{b}{n}
\DeclareSymbolFontAlphabet{\mathfrak}{boondoxfrak}

\newtheorem{theorem}{Theorem}
\newtheorem{proposition}{Proposition}

\journal{Journal of Number Theory}

\begin{document}

\begin{frontmatter}

\title{On the Ramanujan-Ligozat modular unit} %% Article title

\author{Carlos Casta\~no-Bernard} %% Author name

% Title footnote 1.
% eg: \tnotetext[1]{Title footnote text}

% First author
%
% Options: Use if required
% eg: \author[1,3]{Author Name}[type=editor,
%       style=chinese,
%       auid=000,
%       bioid=1,
%       prefix=Sir,
%       orcid=0000-0000-0000-0000,
%       facebook=<facebook id>,
%       twitter=<twitter id>,
%       linkedin=<linkedin id>,
%       gplus=<gplus id>]

% Address/affiliation
\affiliation{organization={Universidad de Colima},
            addressline={Bernal Diaz del Castillo 340}, 
            city={Colima},
%          citysep={}, % Uncomment if no comma needed between city and postcode
            postcode={28045}, 
            state={Colima},
            country={Mexico}}

% Here goes the abstract
\begin{abstract}
Let $\chi$ be the quadratic Dirichlet character
of conductor $N$ associated with a given real quadratic field $k$
and let $X_\chi(N)$ be the double covering of $X_0(N)$ associated with $\chi$.
In this note, we study a modular unit $f$ on the curve  $X_\chi(N)$.
We prove that the transform of $f$ by the Fricke involution $\mathscr{w}_N$ is
the level $N$ analogue of the Rogers-Ramanujan continued fraction
introduced by Lee and Park.
Furthermore, if $N$ is a positive prime discriminant, then $f$ is Ligozat's modular unit,
which is known from Mazur's work on the Eisenstein ideal.
We extend Ligozat's construction to
any positive fundamental discriminant $N$.
\end{abstract}

% Each keyword is seperated by \sep
\begin{keyword}
real quadratic fields \sep modular curves \sep modular units 

\MSC 11R11 \sep 11G18 \sep 11F03

\end{keyword}

\end{frontmatter}

%%%%%%%%%%%%%%%%%%%%%%%%%%%%%%%%%%%%%%%%%%%%%%%%%%%%%%%%%%%%%%%%%%%%%%%%
%
% Main text
%
%%%%%%%%%%%%%%%%%%%%%%%%%%%%%%%%%%%%%%%%%%%%%%%%%%%%%%%%%%%%%%%%%%%%%%%%
\section{Introduction}

\subsection{Statement of main results}\label{sub:shim}
Let $F$ be a real quadratic field and
let $\QQ(\zeta)$ be the minimal cyclotomic field that contains $F$
in a fixed algebraic closure of $\QQ$.
We have $F\subset\QQ(\zeta)^+ = \QQ(\zeta + \zeta^{-1})$ and,
assuming that $\zeta$ is a primitive $N$-\textit{th} root of unity,
we have a diagram of Galois field extensions
\begin{equation*}
\xymatrix{
 & \QQ(\zeta)^+ \ar@{-}[dd]^{\mathcal{G}} \ar@{-}[ld]_{\ker(\chi)} \\
k \ar@{-}[rd]_{\{1,\varepsilon\}} &  \\
 & \QQ 
}
\end{equation*}
where $\chi$ is the primitive quadratic Dirichlet character on the group $\mathcal{G} = (\ZZ/N\ZZ)^\times/\{-1,1\}$,
and $\sigma$ is the non-trivial automorphism of $F$.
Let $K_2$ be the Milnor functor
and $k_{2,F}$ be the order of the finite group $K_2\mathcal{O}_F$,
where $\mathcal{O}_F$ is the ring of integers of $F$.
As a special case of Birch and Tate's conjecture,
proven by Wiles~\cite[p. 499]{wiles:iwasawa},
we have
\begin{equation*}
k_{2,F} = B_{2,\chi},
\end{equation*}
where $B_{2,\chi}$ is the second generalised Bernoulli number that Leopoldt~\cite{leopoldt:bernoulli} associated with $\chi$.
Let us assume that $N\geq 12$.
Then the work of Fox, Urbanowicz, and Williams~\cite[p. 819]{urbanowicz:progress}
implies that $4\mid k_{2,F}$.
Thus  $4\mid  B_{2,\chi}$ and the product
\begin{equation*}
f = \prod_{1\leq r\leq \lfloor \frac{N-1}{2}\rfloor} g_r^{\chi(r)},
\end{equation*}
where $g_r$ denotes a Siegel unit associated with $r$,
defines a modular unit on the complete modular curve $X_1(N)$ over $\QQ$.
(See Subsection~\ref{sub:diamonds}.)
It is well-known that the group $\mathcal{G}$ acts faithfully on the curve $X_1(N)$ via the diamond automorphisms $\langle \cdot \rangle$.
Let $\chi^{\bm{\mu}}$ be the multiplicative M\"obius inverse of $\chi$ with respect to $N$.
(See Subsection~\ref{sub:schering}.)
This leads to the first main result.
%%%%%%%%%%%%%%%%%%%%%%%%%%%%%%%%%%%%%%%%%%%%%%%%%%
%
%       Theorem 1
%
%%%%%%%%%%%%%%%%%%%%%%%%%%%%%%%%%%%%%%%%%%%%%%%%%%
\begin{theorem}\label{thm:one}
As mentioned above, we consider a fundamental discriminant $N\geq 12$.
For each $d\in \mathcal{G}$ the action $f^{\langle d \rangle}$ of the diamond automorphism $\langle d \rangle$ on
the modular unit $f$ is given by
\begin{equation*}
f^{\langle d \rangle} = \chi^{\bm{\mu}}(d) f^{\chi(d)}.
\end{equation*}
\end{theorem}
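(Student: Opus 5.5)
The plan is to work directly with the transformation law of Siegel units under diamond automorphisms and track the root-of-unity constants that appear. For $d$ coprime to $N$, the diamond automorphism $\langle d\rangle$ is represented by a matrix $\gamma_d=\abcd{a}{b}{N}{d}\in\Gamma_0(N)$. The Siegel units $g_r$, for $1\le r\le N-1$, are attached to the vector $(0,r/N)$, or equivalently to the bottom row $(0,r)$. They satisfy $g_r\circ\gamma = \epsilon(\gamma,r)\, g_{r'}$, where $r'$ is the bottom-row entry of $(0,r)\gamma$, reduced modulo $N$ and up to sign, and $\epsilon(\gamma,r)$ is an explicit root of unity given by Kubert--Lang.

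Since $(0,r)\gamma_d=(rN,rd)\equiv(0,rd)$ modulo $N$, we get $g_r^{\langle d\rangle}=\epsilon_r(d)\,g_{\overline{rd}}$. Here $\overline{rd}$ is the representative of $\pm rd \bmod N$ in $[1,(N-1)/2]$; the symmetry $g_{-r}=\pm g_r$ also introduces a sign or root of unity. Substituting into the product defining $f$ and reindexing via $s=\overline{rd}$ gives
\[
\prod_r g_{\overline{rd}}^{\chi(r)}=\prod_s g_s^{\chi(sd^{-1})}=\Bigl(\prod_s g_s^{\chi(s)}\Bigr)^{\chi(d)} .
\]
Here I use that $\chi$ is even, since $N>0$ and $k$ is real, and that $\chi(d)^{-1}=\chi(d)$. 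This yields $f^{\langle d\rangle}=C(d)\,f^{\chi(d)}$ for a constant $C(d)=\prod_r \epsilon_r(d)^{\chi(r)}$.

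The remaining task, and the main obstacle, is to identify $C(d)$ with $\chi^{\bm{\mu}}(d)$. I would first write the Kubert--Lang factor explicitly. Up to a common factor, $\epsilon_r(d)$ is an exponential of the form $\exp(\pi i\,\cdot\,\text{(quadratic expression in } r, a, b, d\text{)}/N)$ times a sign $(-1)^{\lfloor rd/N\rfloor}$-type term coming from reduction to $[1,N/2]$. A factor $\zeta^{(\cdot)}$ depending only on $\gamma_d$ contributes $\bigl(\zeta^{(\cdot)}\bigr)^{\sum_r \chi(r)}$, which is trivial because $\sum_{r\le (N-1)/2}\chi(r)=0$ for even $\chi$. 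The quadratic terms $r^2$ contribute exponents proportional to $\sum_r \chi(r)r^2$, which relates to $B_{2,\chi}$. Since $4\mid B_{2,\chi}$ (the fact quoted in the introduction), these terms should also vanish.

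What survives is a product of signs of the form $\prod_r \bigl(\pm 1\bigr)^{\chi(r)[\,rd \bmod N > N/2\,]}$, a Gauss-lemma or Schering-type product. I expect the paper's Subsection on Schering to define $\chi^{\bm{\mu}}$ as precisely such a product, via Möbius inversion of the relation over divisors of $N$. The remaining step would then be to match this sign product with that definition. If necessary, I would do the matching by first treating prime discriminants $N=p$, where it reduces to a Gauss-lemma count, and then using multiplicativity over the prime-discriminant factors of $N$.
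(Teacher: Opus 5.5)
You follow the paper's reduction. Transform each $g_r$ by the Kubert--Lang (Csirik) law, reindex to get $f^{\langle d\rangle}=C(d)f^{\chi(d)}$, kill the factor depending only on $\gamma_d$ via $\sum_{r\in H}\chi(r)=0$, and use $4\mid B_{2,\chi}$ on the quadratic exponent. The gap is the step you defer, and it is where all the content lies. $\chi^{\bm{\mu}}$ is not defined as a sign product. It is the M\"obius inverse $\prod_{ee'=N}\legendre{\cdot}{e}^{\mu(e')}$, which equals $\legendre{d}{p}$ if $N=p^k$ and $1$ otherwise. Proving that $C(d)$ equals it is the paper's Proposition~1: Schering's generalisation of Gauss's lemma for odd $N$, and for even $N$ an evaluation of $\sum_{x\le N/2-1}\lfloor dx/N\rfloor$ by Cohen's formula, reciprocity, M\"obius inversion and $4\mid\varphi(N)$.

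The route you suggest, multiplicativity over the prime-discriminant factors of $N$, fails because $C(d)$ is not multiplicative in that sense. Take $N=21$, $d=2$. All $\lfloor 2x/21\rfloor$ with $x\in H=\{1,2,4,5,8,10\}$ vanish and $\mathscr{h}_{21}=30$ is even, so $C(2)=+1$; equivalently, exactly two residues $2x \bmod 21$ exceed $21/2$, as Schering predicts for a non-prime-power. But the product of the Gauss-lemma signs for the factors $-3$ and $-7$ is $\legendre{2}{3}\legendre{2}{7}=-1$. Moreover, $-3$ and $-7$ carry odd characters, for which no unit of this shape exists. Your other hint, M\"obius inversion over divisors, is the right one. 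For odd $N$, split $1\le x\le (N-1)/2$ by $\gcd(x,N)$; this turns the Jacobi-symbol form of Gauss's lemma into $\legendre{d}{N}=\prod_{e\mid N}(-1)^{\eta(d,e)}$, where $\eta(d,e)$ counts the $y$ in the reduced half-system mod $e$ with $dy \bmod e>e/2$. Inverting gives Schering's formula. But this has to be carried out, and even $N$ needs its own argument.

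Two bookkeeping claims are also wrong as stated. First, with $c=N$ the quadratic and linear exponents give $(-1)^{dX+Y}$, where $X=\frac1N\sum_{r\in H}\chi(r)r^2$ and $Y=\sum_{r\in H}\chi(r)r$. Since $\tfrac12B_{2,\chi}=X-Y$, the hypothesis $4\mid B_{2,\chi}$ only gives $X\equiv Y\equiv\mathscr{h}_N\pmod 2$. So these terms contribute $(-1)^{(d+1)\mathscr{h}_N}$, not $1$. For $N=13$, $d=2$ one has $\mathscr{h}_{13}=21$ and $\lfloor 2r/13\rfloor=0$ for $r\le 6$. Hence the whole value $\chi^{\bm{\mu}}(2)=\legendre{2}{13}=-1$ comes from the terms you meant to discard. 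Taking $d$ odd (replace $d$ by $d+N$ when $N$ is odd) repairs this.

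Second, the reduction step contributes $(-1)^{\mathscr{s}(d,N)}$ with $\mathscr{s}(d,N)=\sum_{r\in H}\lfloor rd/N\rfloor$, since the folding $g_{N-s}=g_s$ is sign-free; it does not directly give the Gauss count. For odd $N$ one has $\eta(d,N)\equiv\mathscr{s}(d,N)+(d+1)\mathscr{h}_N\pmod 2$, which is exactly the correction the paper builds into Proposition~1. For even $N$ there is no such relation (already for $N=8$, $d=3$ the parities differ), which is why the paper works with the floor sum there.
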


To state our second main result,
let $h_F$ be the class number of the real quadratic field $F$, $\varepsilon_F>1$ be the fundamental unit of $F$,
and recall that the Fricke involution $\mathscr{w}$ of $X_\chi(N)$ exchanges cusps $0$ and $\infty$.
%%%%%%%%%%%%%%%%%%%%%%%%%%%%%%%%%%%%%%%%%%%%%%%%%%%%%%%%%%%%%%%%%%%%%%%%%%%%%%
%
%        Theorem 2
%
%%%%%%%%%%%%%%%%%%%%%%%%%%%%%%%%%%%%%%%%%%%%%%%%%%%%%%%%%%%%%%%%%%%%%%%%%%%%%%
\begin{theorem}\label{thm:two}
For each positive fundamental discriminant $N$ we have
\begin{equation*}
f = \varepsilon_F^{-h_F} + O(q)
\end{equation*}
and
\begin{equation*}
f^{\mathscr{w}} = q^{\tfrac{1}{4}B_{2,\chi}}\prod_{n=1}^\infty(1-q^n)^{\chi(n)},
\end{equation*}
where $q = e^{2\pi i\tau}$ and $\tau$ lies in the upper half-plane $\hyper$.
\end{theorem}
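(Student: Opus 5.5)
We work directly with the product expansions of the Siegel units. With the normalisation of Subsection~\ref{sub:diamonds}, for $1\le r\le N-1$ we write
\begin{equation*}
g_r(\tau) = q^{\frac12 B_2(r/N)}\,(1-\zeta^r)\prod_{n\ge1}(1-q^n\zeta^r)(1-q^n\zeta^{-r}),
\end{equation*}
with $\zeta=e^{2\pi i/N}$, up to a root of unity which disappears in the $\chi$-weighted product; we may need to adjust this normalisation to match the paper's. We then prove the two formulas separately: the first from the behaviour at $\infty$, the second by transporting $f$ to the cusp $0$.

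\emph{Expansion at $\infty$.} The exponent of $q$ in $f$ is
\begin{equation*}
\tfrac12\sum_{r\le (N-1)/2}\chi(r)B_2(r/N).
\end{equation*}
Since $\chi$ is even and $B_2(x)=x^2-x+\tfrac16$, this equals $\tfrac14\sum_{r=1}^{N-1}\chi(r)(r^2/N^2-r/N)$. Using $\sum\chi(r)=0$ and the evenness relation $\sum r\chi(r)=\tfrac N2\sum\chi(r)=0$, the sum is proportional to $\sum_r\chi(r)r^2$. That quantity equals $N\,B_{2,\chi}$ once the linear terms cancel, but the leading coefficient collapses to zero. So $f$ has no zero or pole at $\infty$.

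The constant term is then $\prod_{r}(1-\zeta^r)^{\chi(r)}$. Since $\chi$ is even, the phases $-\zeta^{r/2}\cdot$(sign) cancel, and the product reduces to $\prod_r|2\sin(\pi r/N)|^{\chi(r)}$. By the classical Dirichlet class number formula for real quadratic fields, in its cyclotomic-unit form
\begin{equation*}
\prod_{0<r<N/2}\bigl(2\sin(\pi r/N)\bigr)^{\chi(r)}=\varepsilon_F^{-h_F},
\end{equation*}
this is $\varepsilon_F^{-h_F}$. The sign of the exponent follows from $L(1,\chi)>0$, that is, $-\sum\chi(r)\log|1-\zeta^r|>0$. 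This gives $f=\varepsilon_F^{-h_F}+O(q)$.

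\emph{Fricke transform.} We apply $\tau\mapsto -1/(N\tau)$. Siegel functions $g_{(a_1,a_2)}$ with $a\in N^{-1}\ZZ^2/\ZZ^2$ transform under $S$ by $g_a\circ S=\epsilon\,g_{aS}$, so $g_{(0,r/N)}(-1/\tau)=\epsilon\, g_{(r/N,0)}(\tau)$. Rescaling $\tau\mapsto N\tau$, we get
\begin{equation*}
g_r^{\mathscr w}=\epsilon_r\,q^{\frac12 B_2(r/N)}\prod_{n\equiv \pm r \ (N)}(1-q^{n}),
\end{equation*}
more precisely $\prod_{m\ge0}(1-q^{mN+r})(1-q^{mN+N-r})$ in the variable $q=e^{2\pi i\tau}$. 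Taking $\prod_r(\cdot)^{\chi(r)}$ and using $\chi(N-r)=\chi(r)$, the $q$-products combine to $\prod_{n\ge1}(1-q^n)^{\chi(n)}$, since $\chi(n)=0$ when $\gcd(n,N)>1$. The leading exponent becomes
\begin{equation*}
\tfrac12\sum_{r<N/2}\chi(r)N\,B_2(r/N)=\tfrac N4\sum_{r=1}^{N-1}\chi(r)B_2(r/N)=\tfrac14 B_{2,\chi},
\end{equation*}
by Leopoldt's formula $B_{2,\chi}=N\sum_{r}\chi(r)B_2(r/N)$.

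\emph{Main obstacle.} The hard step is controlling the roots of unity $\epsilon_r$ coming from the transformation of Siegel functions, together with the normalising constants. We must show that their $\chi$-weighted product is exactly $1$, consistent with $f^{\mathscr w}$ being defined over $\QQ$ on $X_\chi(N)$. We would first use that the $\epsilon_r$ depend on $r$ only through $e^{2\pi i\,c\,r^2/N}$-type factors. Their $\chi$-product then involves $\sum\chi(r)r^2\equiv 0$ modulo the relevant modulus, using $4\mid B_{2,\chi}$ from Fox--Urbanowicz--Williams. Failing that, the constant can be fixed by rationality: compare both sides under $\mathrm{Gal}$, together with the fact that the leading coefficient of $\prod(1-q^n)^{\chi(n)}$ is $1$.
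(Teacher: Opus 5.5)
Your route is the paper's: Kubert--Lang product expansions, the class number formula for the constant term, $\mathscr{w}=S\circ(\tau\mapsto N\tau)$, and Leopoldt's $B_{2,\chi}=N\sum_r\chi(r)B_2(r/N)$ for the exponent. The Fricke half is essentially right, but your argument for the first assertion contains a genuine error. You give $g_r=\mathfrak{g}_{(0,r/N)}$ the leading factor $q^{\frac12 B_2(r/N)}$. In the Kubert--Lang expansion the exponent is $\frac12 B_2(a_1)$ with $a_1=0$, so every $g_r$ begins with $q^{1/12}$. The exponent $\frac12 B_2(r/N)$ belongs to $\mathfrak{g}_{(r/N,0)}$, which is what appears only after applying $S$ (the cusp $0$). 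With your exponent, your own computation gives $\frac{1}{4N^2}\sum_r\chi(r)r^2=\frac{B_{2,\chi}}{4N}$, which is nonzero (it is $\frac{1}{25}$ for $N=5$). So the claim that ``the leading coefficient collapses to zero'' is false, and nothing you wrote shows that $f$ has no zero or pole at $\infty$. The correct reason is simply $\frac{1}{12}\sum_{r\in H}\chi(r)=0$.

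The constant term has a similar problem. The root of unity you drop, $e^{2\pi i a_2(a_1-1)/2}=\zeta_N^{-r/2}$, does \emph{not} disappear in the $\chi$-weighted product. Evenness of $\chi$ does not make $\prod_{r\in H}(1-\zeta_N^r)^{\chi(r)}$ real; for $N=5$ it equals $e^{-\pi i/5}\varepsilon_F^{-1}$. What actually happens is $-\zeta_N^{-r/2}(1-\zeta_N^r)=2i\sin(\pi r/N)$ together with $(2i)^{\sum_{r\in H}\chi(r)}=1$. This gives the paper's $\kappa=\prod_{r\in H}(\zeta_N^{-r/2}-\zeta_N^{r/2})^{\chi(r)}$, and only then does the class number formula yield $\varepsilon_F^{-h_F}$.

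Conversely, the ``main obstacle'' you single out does not exist. Klein forms transform without any root of unity: $\mathfrak{k}_a(\alpha\tau)=(c\tau+d)^{-1}\mathfrak{k}_{a\alpha}(\tau)$ by \textbf{K0} and \textbf{K1}. Hence $\mathfrak{g}_a\circ\alpha=\epsilon(\alpha)\,\mathfrak{g}_{a\alpha}$ with $\epsilon(\alpha)$ independent of $a$, exactly as you wrote before switching to $\epsilon_r$. Because $\sum_{r\in H}\chi(r)=0$, the following all cancel: the constant $\epsilon(S)$, the factors $\tau$ and $\Delta^{1/12}$, and the sign $-1$ in the product formula. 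For $a=(r/N,0)$ the phase $e^{2\pi i a_2(a_1-1)/2}$ equals $1$. This is why the paper can write $f^{\mathscr{w}}(\tau)=\prod_{r\in H}\mathfrak{k}_{(-r/N,0)}(N\tau)^{\chi(r)}$ and expand directly.

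The $r^2$-dependent roots of unity you anticipate come from reducing $a\alpha$ modulo $\ZZ^2$ for $\alpha\in\Gamma_0(N)$, as in Csirik's formula in the proof of Theorem~\ref{thm:one}. They do not occur for $S$, and $4\mid B_{2,\chi}$ plays no role in Theorem~\ref{thm:two}. Finally, your displayed formula for $g_r^{\mathscr{w}}$ should carry $q^{\frac N2 B_2(r/N)}$, which is the exponent you actually use in the next line.
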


\subsection{Motivations}
Theorem~\ref{thm:one} was motivated by Mazur's comment~\cite[p. 107]{mazur:eisen}.
More precisely,
let $X_\chi(N)$ be the modular curve defined by the commutativity of the following diagram of Galois coverings
\begin{equation*}
\xymatrix{
 & X_1(N) \ar@{->}[dd]^{\mathcal{G}} \ar@{->}[ld]_{\ker(\chi)} \\
X_\chi(N) \ar@{->}[rd]_{\{1,\nu\}} &  \\
 & X_0(N)
}
\end{equation*}
where $\nu$ denotes the projection of $\langle d\rangle$ to $X_\chi(N)$,
for any $d\in\mathcal{G}$ such that $d\not\in\ker(\chi)$.
Theorem~\ref{thm:one},
together with the basic properties of the multiplicative M\"obius inversion formula,
implies that $f$ is a modular unit on the curve $X_\chi(N)$ over $\QQ$ and either
\begin{equation}\label{eqn:ol}
f^\nu = -\frac{1}{f},
\end{equation}
if the fundamental discriminant $N$ is prime or
\begin{equation*}
f^\nu = \frac{1}{f},
\end{equation*}
otherwise.
The assertion that
for each prime $N\equiv 1\pmod{4}$ greater than $5$
the function $f$ is a modular unit on the curve $X_\chi(N)$ that satisfies Equation~\ref{eqn:ol},
is a well-known result of Ligozat found in the work of Mazur~\cite[p. 107]{mazur:eisen},\footnote{
Only a part of Ligozat's unpublished manuscript is discussed,
and Mazur added that he hopes that Ligozat will
present the full story in his future
publications.}
which he uses to obtain a divisor on the curve $X_\chi(N)$ which
is the pullback of a
certain divisor on the curve $X_0(N)$ that gives the ``extra'' point on
the Jacobian $J_0(N)$.
We hope that Theorem~\ref{thm:one}, together with the divisibility properties of $B_{2, \chi}$,
for a primitive even quadratic character $\chi$
due to Urbanowicz~\cite{urbanowicz:b2},
and the work of Wake and Wang-Erickson~\cite{wwe:squarefree}
on the analogue of Mazur's Eisenstein ideal for
squarefree level $N$, may suggest some interesting arithmetical applications.

Theorem~\ref{thm:two}
generalises the $q\rightarrow 1$ limit formula of some well-known continued fraction identities.
Indeed, if $N = 8$ then % $\tfrac{1}{4}B_{2,\chi}=\frac{1}{2}$
the Ramanujan-G\"ollnitz-Gordon continued fraction can be expressed as
\begin{equation*}
f^\mathscr{w} =\cfrac{q^{1/2}}{1+ q+\cfrac{q^2}{1+ q^3+\cfrac{q^4}{1+ q^5+\cfrac{q^6}{1+q^7+\ddots}}}},
\end{equation*}
% which is a Hauptmodul on a covering of the curve $X_\chi(8)$.
% (See Cho, Koo, and Park~\cite[p. 932]{koo:goenitz}.)
and taking the limit of the right-hand side as $q\rightarrow 1$, we arrive at
\begin{equation*}
f = \frac{-2 + \sqrt{8}}{2} + O(q),
\end{equation*}
and if $N = 5$  then % $\tfrac{1}{4}B_{2,\chi}=\frac{1}{5}$
the Rogers-Ramanujan continued fraction may be expressed as
\begin{equation*}
f^\mathscr{w} =\cfrac{q^{1/5}}{1+\cfrac{q}{1+\cfrac{q^2}{1+\cfrac{q^3}{1+\ddots}}}},
\end{equation*}
% which is known to be a Hauptmodul of a covering of the curve $X_\chi(5)$.
and  taking the limit of the right-hand side as $q\rightarrow 1$, we arrive at
\begin{equation*}
f = \frac{-1 + \sqrt{5}}{2} + O(q).
\end{equation*}
Furthermore, Theorem~\ref{thm:one} and Theorem~\ref{thm:two} imply that $f^\mathscr{w} + f^{\nu\mathscr{w}}$
is a rational function on the modular curve $X_0(N)$,
which is a generalisation of Lee and Park~\cite[Theorem 1.1, p. 308]{leepark:level13} to higher levels.

%\section*{Acknowledgement}
%The author wishes to thank
%Gari Peralta-\'Alvarez
%for his helpful comments.
%%%%%%%%%%%%%%%%%%%%%%%%%%%%%%%%%%%%%%%%%%%%%%%%%%%%%%%%%%%%%%%%%%%%%%%%
%
%                   The proof
%
%%%%%%%%%%%%%%%%%%%%%%%%%%%%%%%%%%%%%%%%%%%%%%%%%%%%%%%%%%%%%%%%%%%%%%%%
\section{The proofs}

\subsection{The Schering symbol}\label{sub:schering}
%
% https://personal.math.ubc.ca/~cass/research/pdf/QuadraticCFT.pdf
%
% https://kconrad.math.uconn.edu/blurbs/gradnumthy/Gauss-Jacobi-sums.pdf
%
% http://alpha.math.uga.edu/~pete/wilson_easy.pdf
%
The Kronecker symbol associated to a positive integer $m$ and an element $a\in(\ZZ/m\ZZ)^\times$
is defined by
\begin{equation*}
\legendre{a}{m} = \prod_{i=1}^n \legendre{a}{p_i}^{e_i},
\end{equation*}
where $m = p_1^{e_1}\dots p_n^{e_n}$ is the prime factorisation of $m$, and $(\frac{a}{p})$ is
either the Legendre symbol, if $p$ is odd or
\begin{equation*}
\legendre{a}{p} = (-1)^{\frac{a^2 + 1}{8}},
\end{equation*}
if $p=2$.
Given a positive integer $m$,
we define the \textit{Schering symbol} $\legendre{\cdot}{m}^{\bm{\mu}}$ 
as the multiplicative M\"obius inverse of the Kronecker symbol $\legendre{\cdot}{m}$.
That is,
\begin{equation*}
\legendre{\cdot}{m}^{\bm{\mu}} = \prod_{d d^\prime = m} \legendre{\cdot}{d}^{\mu(d^\prime)}.
\end{equation*}
From well-known properties of the M\"obius inversion formula, we have
\begin{equation*}
\legendre{a}{m}^{\bm{\mu}} = 
\begin{cases}
    \legendre{a}{p} & \text{if $m=p^k$,} \\
        1         & \text{otherwise,}
    \end{cases}                                                                                                 
\end{equation*}
where $p$ is a prime number.

%%%%%%%%%%%%%%%%%%%%%%%%%%%%%%%%%%%%%%%%%%%%%%%%%%
%
% Tom Leinster, The Euler characteristic of a category
%
% https://arxiv.org/pdf/math/0610260
%
%  Gauss, Eisenstein, and the "Third" Proof of the Quadratic Reciprocity Theorem: Ein kleines Schauspie
% /home/carlos/Dropbox/Neue_Forschungsarbeit/laubenbacher
%
%%%%%%%%%%%%%%%%%%%%%%%%%%%%%%%%%%%%%%%%%%%%%%%%%%
\begin{proposition}\label{prop:key}
Let $N$ be a positive fundamental discriminant.
For each $a\in(\ZZ/N\ZZ)^\times$ the Schering symbol can be expressed as:
\begin{equation*}
\legendre{a}{N}^{\bm{\mu}} = (-1)^{ \mathscr{s}(a, N) + (a + 1)\mathscr{h}_N},
\end{equation*}
where
\begin{equation*}
\mathscr{s}(a, N) = \sum_{x\in H}\floor*{\frac{ax}{N}}
\end{equation*}
and
\begin{equation*}
\mathscr{h}_N=\sum_{x\in H}x,
\end{equation*}
with summation over the half-system $H$ consisting of
the integers $1\leq x \leq N^\prime$ such that $(x, N) = 1$,
and
\begin{equation*}
N^\prime = 
\begin{cases}
    \tfrac{N}{2} -1          & \text{if $N$ is even,} \\
    \tfrac{N - 1}{2}         & \text{if $N$ is odd.}
    \end{cases}                                                                                                 
\end{equation*}
\end{proposition}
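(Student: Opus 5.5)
The plan is to prove the formula through a Gauss--Schering lemma for the reduced half-system $H$, followed by an Eisenstein-type parity computation. For $a\in(\ZZ/N\ZZ)^\times$ and $x\in H$, write $ax=N\floor*{\frac{ax}{N}}+r_x$ with $0<r_x<N$. Here $r_x$ is coprime to $N$. Let $\mu(a)$ be the number of $x\in H$ with $r_x>N/2$. For those $x$ put $r_x^\prime=N-r_x$, and for the others $r_x^\prime=r_x$. Since $H\cup(-H)$ is a full system of reduced residues modulo $N$ (for even $N$ the excluded value $N/2$ is not a unit), the map $x\mapsto r_x^\prime$ is a permutation of $H$. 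Multiplying over $H$ gives
\begin{equation*}
a^{|H|}\prod_{x\in H}x\equiv(-1)^{\mu(a)}\prod_{x\in H}x \pmod N,
\end{equation*}
so $(-1)^{\mu(a)}\equiv a^{\varphi(N)/2}\pmod N$. This is the transfer $(\ZZ/N\ZZ)^\times\to\{\pm1\}$, and for an abelian group the transfer is just raising to the index.

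\textbf{Step 1: identifying $(-1)^{\mu(a)}$ for odd $N$.} If $N=p$ is prime, Euler's criterion gives $a^{(p-1)/2}\equiv\legendre{a}{p}$. If $N$ is odd, $N$ is squarefree with at least two prime factors, so $(\ZZ/N\ZZ)^\times$ is not cyclic. Its exponent then divides $\varphi(N)/2$, hence $a^{\varphi(N)/2}\equiv1$, which matches $\legendre{a}{N}^{\bm{\mu}}=1$.

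\textbf{Step 2: the parity relation for odd $N$.} Summing $ax=N\floor*{\frac{ax}{N}}+r_x$ over $H$ gives
\begin{equation*}
a\,\mathscr{h}_N=N\mathscr{s}(a,N)+\sum_{x\in H} r_x .
\end{equation*}
In the last sum, each $r_x$ with $r_x>N/2$ equals $N-r_x^\prime$. Therefore
\begin{equation*}
\sum_{x\in H} r_x = \mathscr{h}_N+\mu(a)N-2\sum_{r_x>N/2} r_x^\prime .
\end{equation*}
Reducing modulo $2$ with $N$ odd yields $\mu(a)\equiv\mathscr{s}(a,N)+(a+1)\mathscr{h}_N$, which proves the proposition for odd $N$.

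\textbf{Step 3: even $N$, the main obstacle.} Here $N=4m$ with $m\equiv3\pmod4$ squarefree, or $N=8m$ with $m$ odd squarefree. Both steps above break. The relation modulo $2$ is empty because $N$ is even. Step 1 also fails: for $N=8$ one has $a^{2}\equiv1\pmod 8$ for every unit $a$, while $\legendre{a}{8}^{\bm{\mu}}=\legendre{a}{2}$ is nontrivial. So for even $N$ the sign $(-1)^{\mu(a)}$ is not the right invariant, and I would work with $\mathscr{s}(a,N)$ directly.

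Every $x\in H$ is odd, so $N\mathscr{s}(a,N)=a\mathscr{h}_N-\sum_{x\in H} r_x$. I would evaluate this modulo $2N$ instead of modulo $2$. To do so, I would split $H$ according to the residue of $x$ modulo $4$ (or modulo $8$), and compare $\sum_{x\in H} r_x$ with $\mathscr{h}_N$ using the permutation $x\mapsto r_x^\prime$ together with the known sums of odd units in quarter-intervals. The expected outcome is that $\mathscr{s}(a,N)+(a+1)\mathscr{h}_N$ is governed by $a \bmod 4$ or $a\bmod 8$, giving $\legendre{a}{2}$ when $N=8$ and something trivial otherwise. Concretely I would first verify the cases $N=8$ and $N=12$ by hand, and check $N=4$ as a sanity case of the same computation. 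For general even $N$ I would then reduce to these via the Chinese remainder theorem, writing $\mathscr{s}(a,N)$ as a Dedekind-type sum and using its behaviour under $a\mapsto a+N/2$ or $a\mapsto -a$.
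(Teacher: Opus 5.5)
Your Steps 1 and 2 are correct and match the paper's Case 1. The same Gauss--Schering decomposition, summed modulo $2$, gives $\mu(a)\equiv\mathscr{s}(a,N)+(a+1)\mathscr{h}_N$. At that point the paper quotes Schering's theorem to identify $(-1)^{\mu(a)}$. You instead use $(-1)^{\mu(a)}\equiv a^{\varphi(N)/2}\pmod N$ together with Euler's criterion and the non-cyclicity of $(\ZZ/N\ZZ)^\times$. That is a legitimate, more self-contained substitute, because odd fundamental discriminants are squarefree.

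The gap is Step 3, which lists intentions rather than giving an argument. The CRT reduction has no mechanism: $\mathscr{s}(a,N)$ is a floor sum over an interval and does not factor over the prime-power parts of $N$. Since $a$ is odd, $(a+1)\mathscr{h}_N$ is even. So what you must show is $(-1)^{\mathscr{s}(a,N)}=\legendre{a}{2}$ for $N=8$, and $\mathscr{s}(a,N)$ even for every other even $N$. The fundamental-discriminant hypothesis has to enter somewhere. Your ``sanity case'' $N=4$ in fact violates the identity: $\mathscr{s}(3,4)=0$, while $\legendre{3}{4}^{\bm{\mu}}=\legendre{3}{2}=-1$.

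The paper handles even $N$ differently. It quotes a closed formula (Cohen, Cor.~2.2.14) for the parity of the complete sum $\sum_{x=1}^{N/2-1}\lfloor ax/N\rfloor$ in terms of $\legendre{2N}{a}$. It then applies reciprocity and M\"obius-inverts over the divisors of $N$. The leftover sign $(-1)^{\varphi(N)(a-1)/4}$ is trivial because $4\mid\varphi(N)$.

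Alternatively, your own transfer idea goes through if you work modulo $2N$ instead of $N$:
\begin{itemize}
\item The parity of $\lfloor ax/N\rfloor$ depends only on $ax \bmod 2N$.
\item Every $x\in H$ is odd, so $(N\pm1)x\equiv N\pm x\pmod{2N}$. Hence $H$ is a transversal of the Klein group $V=\{\pm1,N\pm1\}$ in $(\ZZ/2N\ZZ)^\times$.
\item $\lfloor ax/N\rfloor$ is odd exactly when the $V$-component of $ax$ is $-1$ or $N+1$.
\item The same transfer computation then gives $(-1)^{\mathscr{s}(a,N)}=\psi(a^{\varphi(N)/2})$, where $\psi$ is the character of $V$ with kernel $\{1,N-1\}$.
\item For $N=8$: $a^2\equiv 9\pmod{16}$ exactly when $a\equiv\pm3\pmod 8$, which yields $\legendre{a}{2}$.
\item For $N=4m$ or $8m$ with $m>1$ odd: the exponent of $(\ZZ/2N\ZZ)^\times$ divides $\varphi(N)/2$, so the sign is $1$.
\end{itemize}
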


\begin{proof}
Let $\bar a\in (\ZZ/N\ZZ)^\times$.
We divide into cases according to the parity of $N$:

\bigskip
\noindent\textit{Case 1: Assume $N$ is odd}. 
There is a permutation $\pi\in\textrm{Perm}(H)$ and a mapping
\begin{equation*}
\varepsilon : H \longrightarrow \{-1,1\}
\end{equation*}
such that for each $x\in H$:
\begin{equation*}
ax \equiv \varepsilon(x)\pi(x) \pmod{N}.
\end{equation*}
Thus
\begin{displaymath}
ax=\left\{
\begin{array}{ll}
\floor*{\frac{ax}{N}}N + \varepsilon(x)\pi(x), &\textrm{if $\varepsilon(x) = 1$},\\ 
&\\
\floor*{\frac{ax}{N}}N + \varepsilon(x)\pi(x) +N, &\textrm{if $\varepsilon(x) = -1$.}\\
\end{array}
\right.
\end{displaymath}
Summing over $H$ we get
\begin{equation*}
a\mathscr{h}_N  = N\mathscr{s}_N(a) + \kappa + N \eta(a, N),
\end{equation*}
where $\eta(a, N)=\varepsilon^{-1}(-1)$ and
\begin{equation*}
\kappa = \sum_{x\in H} \varepsilon(x)\pi(x) \equiv \mathscr{h}_N\pmod{2},
\end{equation*}
as $N$ is odd.
Therefore
\begin{equation*}
a \mathscr{h}_N  \equiv \mathscr{s}_N(a) +  \mathscr{h}_N +\eta(a, N)\pmod{2}.
\end{equation*}
In other words,
\begin{equation*}
 \eta(a, N)  \equiv  \mathscr{s}_N(a) + (a + 1)\mathscr{h}_N\pmod{2}.
\end{equation*}
But a result due to Schering~\cite[N. 14, p. 161]{schering:quad}
(cf. Cartier~\cite[p. 36]{cartier:symboles}) implies that
\begin{equation*}
(-1)^{\eta(a, N)} = 
\begin{cases}
    \legendre{a}{p} & \text{if $N=p^k$,} \\
        1         & \text{otherwise,}
    \end{cases}                                                                                                 
\end{equation*}
where $p$ is a prime number.
Hence Case 1 follows.

\bigskip
\noindent\textit{Case 2: Assume $N$ is even}.
Let
\begin{equation*}
\mathscr{S}(a, N) = \sum_{x = 1}^{N^\prime} \floor*{\frac{a x}{N}}.
\end{equation*}
Note that $a$ is odd.
Thus, Cohen~\cite[Corollary 2.2.14 (2), p. 42]{cohen:gtm239} implies that
\begin{align*}
 (-1)^{\mathscr{S}(a, N)} & =  (-1)^{\frac{1}{4}(N - 2)(a - 1)}\legendre{2N}{a} \\
                                      & =  (-1)^{\frac{1}{4}(N - 2)(a - 1)  +\frac{1}{8}(a^2 - 1) }\legendre{N}{a} \\
				      & =  (-1)^{\frac{1}{4}(N - 2)(a - 1)  +\frac{1}{8}(a^2 - 1) }\legendre{a}{N},
\end{align*}
where the last equality is an immediate consequence of our assumption that $N$ is a fundamental discriminant
and the quadratic reciprocity law.
Then,
by applying the multiplicative M\"obius inversion formula with respect to $N$,
we arrive at
\begin{equation*}
(-1)^{\mathscr{s}_N(a)} = (-1)^{\frac{\varphi(N)}{4}(a - 1)} \legendre{a}{N}^{\bm{\mu}}.
\end{equation*}
However, $N$ is an even discriminant.
Hence $4 \mid N$, and thus $4 \mid \varphi(N)$.
The latter, together with the fact that $a$ is odd, yield
\begin{equation*}
(-1)^{\frac{\varphi(N)}{4}(a - 1)} = 1
\end{equation*}
and the proposition follows.
\end{proof}

% {
% schering_symbol(a, m) =
% local(s);
%  s = 1;
% fordiv(m, d, s = s*kronecker(a, d)^moebius(m/d));
% return(s);
% }
% 
% m = 64*3;
% for(a = 1, m, if(gcd(a, m) ==1, print(a, " ",  schering_symbol(a, m), "   ", kronecker(a, 8))));

%%%%%%%%%%%%%%%%%%%%%%%%%%%%%%%%%%%%%%%%%%%%%%%%%%%%%%%%%%%%%%%%%%%%%
%            
%                 
%                   The modular units $f$ and $f^\mathscr{v}$
%                 
%                                    
%%%%%%%%%%%%%%%%%%%%%%%%%%%%%%%%%%%%%%%%%%%%%%%%%%%%%%%%%%%%%%%%%%%%%
\subsection{Proof of Theorem~\ref{thm:one}}\label{sub:diamonds}
The proof uses Proposition~\ref{prop:key} from the previous subsection,
together with
a result of Csirik~\cite{csirik:kernel}
and the divisibility property of $B_{2,\chi}$ mentioned in the introduction.

Following Kubert and Lang~\cite[p. 27]{lang:munits},
for each $z\in\CC$ and each lattice $L\subset\CC$ we may define the Klein form
\begin{equation*}
\mathfrak{k}(z, L) = e^{-\frac{1}{2}\eta(z,L)z}\sigma(z,L),
\end{equation*}
where $\sigma(z,L)$ is the Weierstra\ss\ sigma-function and $z\mapsto\eta(z,L)$
is the $\RR$-linear function that gives the quasi-periods of the Weierstra\ss\ zeta-function
with respect to the lattice $L$.
Put $\mathfrak{k}_a(\tau) = \mathfrak{k}(z,L_\tau)$,
where the point $a=(a_1,a_2)\in\RR^2$
is uniquely determined by $z = a_1\tau + a_2$ and $L_\tau = \ZZ\tau\oplus \ZZ$,
with $\tau$ lying in the upper-half plane
\begin{equation*}
\hyper = \{z\in\CC:\Im(z)>0\}.
\end{equation*}
%
% See more on Siegel units in:
%
% https://warwick.ac.uk/fac/sci/maths/people/staff/david_loeffler/teaching/modularcurves/lecture_notes.pdf
%
The \textit{Siegel function} is
\begin{equation*}
\mathfrak{g}_a (\tau) =\mathfrak{k}_a(\tau)\Delta^\frac{1}{12}(\tau),
\end{equation*}
where $\Delta^\frac{1}{12}(\tau)$ denotes
the principal part of the $12$-\textit{th} root of the discriminant $\Delta(\tau)$.
For each $r\in H$ and $\tau\in\hyper$ let
\begin{equation*}
g_r(\tau)=\mathfrak{g}_{(0,\frac{r}{N})}(\tau),
\end{equation*}
where $H$ is the half-system described in the previous subsection.
The modular unit $f$ is defined by the product
\begin{equation*}
f = \prod_{r\in H} g_r^{\chi(r)}.
\end{equation*}
Let $Y_1(N)$ be the moduli space that classifies ordered pairs $(E,P)$,
where $E$ is an elliptic curve over $\CC$ and $P$ is a point of order $N$ on $E$,
and let $Y_0(N)$ be the (coarse) moduli space that classifies ordered pairs that
classifies ordered pairs $(E, E^\prime)$ of elliptic curves $E$ and $E^\prime$ over $\CC$ together with
a cyclic isogeny $E\longrightarrow E^\prime$ of degree $N$.
Let $X_1(N)$ and $X_0(N)$ denote the corresponding smooth completions.
There are natural identifications
\begin{equation*}
X_1(N)(\CC)\cong \hyper^*/\Gamma_1(N)
\end{equation*}
and
\begin{equation*}
X_0(N)(\CC)\cong \hyper^*/\Gamma_0(N),
\end{equation*}
where
\begin{equation*}
\Gamma_1(N)=
\left\{\mtwo{a}{b}{c}{d}\in\Gamma_0(N)\,\vert\, a,d\equiv 1\pmod{N}\right\}
\end{equation*}
and
\begin{equation*}
\Gamma_0(N)=
\left\{\mtwo{a}{b}{c}{d}\in\textrm{SL}_2(\ZZ)\,\vert\,c\equiv 0\pmod{N}\right\}
\end{equation*}
act on the extended upper half plane $\hyper^*=\hyper\cup\PP^1(\QQ)$ via M\"obius transformations.
Following D.\thinspace{}E. Rohrlich and G.\thinspace{}F. Rohrlich\cite[p. 68]{rohrlich:1997},
for each $r\in\mathcal{G}$
the multiplication-by-$r$-map $P\mapsto [r]P$ induces an automorphism $\langle r\rangle$ of $X_1(N)$ over $\QQ$
known as the diamond automorphism
and $\langle r\rangle$ may be represented by any matrix $\abcd{a}{b}{c}{d}\in\Gamma_0(N)$ such that
\begin{equation*}
r\equiv d\pmod{N}.
\end{equation*}
So Theorem~\ref{thm:one} is equivalent to the assertion that
for each $M=\abcd{a}{b}{c}{d}\in\Gamma_0(N)$ we have
\begin{equation*}
f(M\tau) =  \chi^{\bm{\mu}}(d)  f(\tau)^{\chi(d)}.
\end{equation*}
To prove this, let $\{\cdot\}$ denote the lift map from $\mathcal{G}$ to $H$
and note Theorem 3.6 (p. 357) of Csirik~\cite[p. 357]{csirik:kernel} implies that
for each $r\in \mathcal{G}$ we have
\begin{equation*}
g_r(M\tau)=
\xi(M)\kappa(M, r)g_{\{r d\}}(\tau),
\end{equation*}
where
\begin{equation*}
\kappa(M, r) =
e^{-\pi i( \frac{r^2}{N^2}c d + \frac{r}{N}c + \lfloor\frac{r}{N}d\rfloor)},
\end{equation*}
and $\xi(M)$ is a $12$-th root of unity that depend only on $M$.
Hence
\begin{equation*}
\begin{split}
f(M\tau) &=
\prod_{r\in H} g_r(M\tau)^{\chi(r)}=
\prod_{r\in H}
\left(
\xi(M)
e^{\pi i(-\frac{r^2}{N^2}c d + 
\frac{r}{N}c + \lfloor \frac{r}{N}d \rfloor)}g_{\{rd\}}(\tau)
\right)^{\chi(r)} \\
                &= e^{\pi i S}f(\tau)^{\chi(d)},
\end{split}
\end{equation*}
where $S=S_1+S_2+S_3$ with
\begin{equation*}
\begin{split}
S_1 &= -\frac{c d}{N^2}\sum_{r\in H}\chi(r)r^2 = -c^\prime d\cdot\frac{1}{N}\sum_{r\in H}\chi(r)r^2,\\
S_2 &= \frac{c}{N}\sum_{r\in H}\chi(r) r = c^\prime\sum_{r\in H}\chi(r) r,\\
S_3 &= \sum_{r\in H}\chi(r)\left\lfloor \frac{r}{N} d \right\rfloor \equiv \mathscr{s}_N(d)\pmod{2}. \\
\end{split}
\end{equation*}
and $c^\prime = \tfrac{c}{N}$.
We divide into cases according to the parity of $c^\prime$:

\bigskip
\noindent\textit{Case 1: Assume $c^\prime$ is even}.
We immediately have
\begin{equation*}
S \equiv \mathscr{s}_N(d)\pmod{2}.
\end{equation*}
However, $M=\abcd{a}{b}{c}{d}\in\Gamma_0(N)$,
so $ad -c^\prime Nd = 1$, and thus $ad \equiv 1\pmod{2}$. Hence, $d$ is odd, and
Proposition~\ref{prop:key} implies that 
\begin{equation*}
(-1)^S = \legendre{d}{N}^{\bm{\mu}}
\end{equation*}
and Theorem~\ref{thm:one} follows.

\bigskip
\noindent\textit{Case 2: Assume $c^\prime$ is odd}.
The generalised Bernoulli numbers $B_{n,\chi}$,
attached to the character $\chi$ can be defined by the formal power series:
\begin{equation*}
\sum_{r=1}^N \chi(r)\frac{Xe^{rX}}{e^{NX}-1}=
\sum_{n=0}^\infty B_{n,\chi}\frac{X^n}{n!}
\end{equation*}
In particular,
\begin{equation*}
B_{2,\chi} = \frac{1}{N}\sum_{r=1}^N \chi(r)r^2 - \sum_{r=1}^N \chi(r)r.
\end{equation*}
As mentioned in the introduction, it is well-known that $B_{2,\chi}$ is divisible by $4$.
(Cf. Fox, Urbanowicz, and Williams~\cite[p. 819]{urbanowicz:progress}.)
In particular, $\tfrac{1}{2}B_{2,\chi}$ is even.
Because the character $\chi$ is even, we have
\begin{equation*}
\frac{1}{2}B_{2,\chi} = \frac{1}{N}\sum_{r=1}^{N^\prime} \chi(r)r^2 - \sum_{r=1}^{N^\prime} \chi(r)r.
\end{equation*}
So we arrive at the congruences
\begin{equation*}
\frac{1}{N}\sum_{r=1}^{N^\prime} \chi(r)r^2 \equiv \sum_{r=1}^{N^\prime} \chi(r)r\equiv\mathscr{h}_N \pmod{2}.
\end{equation*}
Therefore
\begin{equation*}
S \equiv  (d + 1)\mathscr{h}_N + \mathscr{s}_N(d)\pmod{2}.
\end{equation*}
Proposition~\ref{prop:key} implies that 
\begin{equation*}
(-1)^S = \legendre{d}{N}^{\bm{\mu}}
\end{equation*}
and Theorem~\ref{thm:one} follows.

%%%%%%%%%%%%%%%%%%%%%%%%%%%%%%%%%%%%%%%%%%%%%%%%%%%%%%%%%%%%%%%%%%%%%
%
%
%                    The $q$-expansion of $f$ and of $f^\mathscr{w}$
%   
%
%%%%%%%%%%%%%%%%%%%%%%%%%%%%%%%%%%%%%%%%%%%%%%%%%%%%%%%%%%%%%%%%%%%%%
\subsection{Proof of Theorem~\ref{thm:two}}
Kubert and Lang~\cite[p. 29]{lang:munits}
proved that the Siegel function $\mathfrak{g}_a(\tau)$ has a product expansion
\begin{equation*}
\mathfrak{g}_a(\tau)=-q_\tau^{\frac{1}{2}B_2(a_1)} e^{2\pi i a_2(a_1 - 1)/2} (1 - q_z)
\prod_{n=1}^\infty(1 - q_\tau^n q_z)(1 - q_\tau^n q_z^{-1}),
\end{equation*}
where $q_z=e^{2\pi iz}$ with $z\in\CC$.   
Let $\zeta_N=e^{2\pi i/N}$ and note that
\begin{align*}
f(\tau) &=  \prod_{r=1}^{N^\prime}
\left(\zeta_N^{-\frac{r}{2}}(1 - \zeta_N^r)
\prod_{n=1}^\infty(1 - q_\tau^n\zeta_N^r)(1 - q_\tau^n\zeta_N^{-r})
\right)^{\chi(r)} \\
        &= \left(
	         \prod_{r=1}^{N^\prime}
                 \zeta_N^{-\chi(r)\frac{r}{2}}(1 - \zeta_N^r)^{\chi(r)}
           \right)
	   \left(
\prod_{n=1}^\infty\prod_{r=1}^{N^\prime}(1 - q_\tau^n\zeta_N^r)^{\chi(r)}(1 - q_\tau^n\zeta_N^{-r})^{\chi(r)}
           \right) \\
        &= \kappa\prod_{n=1}^\infty\Psi(q^n),
\end{align*}
where 
\begin{align*}
\Psi(X) &=\prod_{r=1}^{N^\prime}(1-X\zeta_p^r)^{\chi(r)}(1-X\zeta_p^{-r})^{\chi(r)}\\
        &=\prod_{r=1}^{N^\prime}(1-X\zeta_p^r)^{\chi(r)}(1-X\zeta_p^{-r})^{\chi(-r)}\\
        &=\prod_{r=1}^{N-1}(1-X\zeta_p^r)^{\chi(r)} = 1 + O(X),
\end{align*}
and
\begin{equation*}
\kappa = \prod_{r=1}^{N^\prime}(\zeta_N^{-\frac{r}{2}} - \zeta_N^{\frac{r}{2}})^{\chi(r)} = \varepsilon_F^{-h_F}.
\end{equation*}
The last equality follows from the first equation in
Th\'eor\`eme~1 of Borevi\v c and \v Safarevi\v c~\cite[p.~385]{borevitch:nombres}, which states that:
\begin{equation*}
h_F = -\frac{1}{\log\varepsilon_F}\sum_{\substack{(r,N)=1 \\ 0<r<\frac{N}{2}}}\chi(r)\log\sin \frac{\pi r}{N}.
\end{equation*}
Hence
\begin{equation*}
f =\varepsilon_F^{-h_F} + O(q).
\end{equation*}
This proves the first assertion of Theorem~\ref{thm:two}.
To prove the second assertion,
consider that the Fricke involution $\mathscr{w}$ is induced by the action of
\begin{equation*}
\tau\mapsto -\frac{1}{N\tau},
\end{equation*}
which we may factor as the map $\tau\mapsto N\tau$ followed by the action of the matrix
\begin{equation*}
S=\mtwo{0}{-1}{1}{0}.
\end{equation*}
But \textbf{K0} and \textbf{K1} of Kubert and Lang~\cite[p. 27]{lang:munits}
say that for each $\alpha\in\textrm{SL}_2(\ZZ)$ and each $\tau\in\hyper$ we have
\begin{equation*}
\mathfrak{k}_{a\alpha}(\tau) = (c\tau + d)\mathfrak{k}_a(\alpha\tau).
\end{equation*}
Therefore
\begin{align*}
f^\mathscr{w}(\tau) &=  \prod_{r=1}^{N^\prime}\mathfrak{k}_{(-r/N, 0)}(N\tau)^{\chi(r)}\\
                     &=
\prod_{r=1}^{N^\prime}
\left(
q_\tau^{\frac{1}{2}B_2(\frac{r}{N})N}
(1-q_\tau^r)\prod_{n=1}^\infty (1 - q_\tau^{Nn + r})(1 - q_\tau^{Nn - r})
\right)^{\chi(r)}\\
                     &= q_\tau^{\frac{1}{4}B_{2,\chi}}
\prod_{r=1}^{N^\prime}(1-q_\tau^r)^{\chi(r)}
\prod_{r=1}^{N^\prime}
\prod_{n=1}^\infty (1-q_\tau^{Nn + r})^{\chi(r)}(1-q_\tau^{Nn - r})^{\chi(-r)}\\
                     &= q_\tau^{\frac{1}{4}B_{2,\chi}}\prod_{n=1}^\infty(1-q_\tau^n)^{\chi(n)}
\end{align*}
and Theorem~\ref{thm:two} follows.

%%%%%%%%%%%%%%%%%%%%%%%%%%%%%%%%%%%%%%%%%%%%%%%%%%%%%%%%%%%%%%%%%%%%%%%
%
%
%
%%%%%%%%%%%%%%%%%%%%%%%%%%%%%%%%%%%%%%%%%%%%%%%%%%%%%%%%%%%%%%%%%%%%%%%


\begin{thebibliography}{00}

\bibitem{borevitch:nombres}
Z.\thinspace{}I. {Borevi\v c} and I.\thinspace{}R. {\v Safarevi\v c},
  \emph{Th\'eorie des nombres}, Monographies internationales de math\'ematiques
  modernes, Gauthier-Villars, Paris, 1967.

\bibitem{cartier:symboles}
P.~Cartier, \emph{Sur une g\'en\'eralisation des symboles de
  {L}egendre-{J}acobi}, LEM \textbf{16} (1970), 31--48.

\bibitem{cohen:gtm239}
H.~Cohen, \emph{Number theory: Volume {I}: {T}ools and {D}iophantine
  equations}, Graduate Texts in Mathematics, vol. 239, Springer-Verlag, Berlin,
  2007.

\bibitem{csirik:kernel}
J.~Csirik, \emph{The {K}ernel of the {E}isenstein {I}deal}, Journal of Number
  Theory \textbf{92} (2002), no.~2, 348--375.

\bibitem{urbanowicz:progress}
G.\thinspace{}J. Fox, J.~Urbanowicz, and K.\thinspace{}S. Williams,
  \emph{Gauss' congruence from {D}irichlet's class number formula and
  generalizations}, Number Theory in Progress, vol. 1 and 2, Walter de Gruyter,
  Berlin, New York, 1999, Proceedings of the International Conference in Honor
  of the 60th Birthday of Andrzej Schinzel, Zakopane, Poland, June 30-July 9,
  1997, pp.~xvi + 1185.

\bibitem{lang:munits}
D.\thinspace{}S. Kubert and S.~Lang, \emph{Modular units}, Grundlehren der
  Mathematischen Wissenschaften, vol. 244, Springer-Verlag, New York, 1981.

\bibitem{leepark:level13}
Y.~Lee and Y.\thinspace{}K. Park, \emph{The level 13 analogue of the
  {R}ogers-{R}amanujan continued fraction and its modularity}, Journal of
  Number Theory \textbf{168} (2016), 306--333.

\bibitem{leopoldt:bernoulli}
H.\thinspace{}W. Leopoldt, \emph{Eine {V}erallgemeinerung der {B}ernoullischen
  {Z}ahlen}, Abh. Math. Sem. Univ. Hamburg \textbf{22} (1958), 131--140.

\bibitem{mazur:eisen}
B.~Mazur, \emph{Modular curves and the {E}isenstein ideal}, Inst. Hautes
  \'Etudes Sci. Publ. Math. (1977), no.~47, 33--186 (1978).

\bibitem{rohrlich:1997}
D.\thinspace{}E. Rohrlich and G.\thinspace{}F. Rohrlich, \emph{Modular
  {C}urves, {H}ecke {C}orrespondences, and {L}-{F}unctions}, Modular Forms and
  Fermat's Last Theorem (Gary Cornell, Joseph~H. Silverman, and Glenn Stevens,
  eds.), Springer, New York, 1997, pp.~41--100.

\bibitem{schering:quad}
E.~Schering, \emph{Zur {T}heorie der quadratischen {R}este}, Acta Math.
  \textbf{1} (1882), 153--170.
  
\bibitem{urbanowicz:b2}
J.~Urbanowicz, \emph{Connections between {$B_{2,\chi }$} for even quadratic {Dirichlet} characters $\chi $ and class numbers of appropriate imaginary quadratic fields, {I}},
Compositio Mathematica \textbf{75} (1990), no.~3, 247--270.

\bibitem{wwe:squarefree}
P.~Wake, P. and C. Wang-Erickson, \emph{The {E}isenstein ideal with squarefree level},
Advances in Mathematics \textbf{380} (2021), 247--270.

\bibitem{wiles:iwasawa}
A.~Wiles, \emph{The {I}wasawa conjecture for totally real fields}, Ann. of
  Math. \textbf{131} (1990), no.~3, 493--540.

\end{thebibliography}
\end{document}